\numberwithin{figure}{section}
\newtheorem{lemma}{Lemma}[section]
\newtheorem{definition}[lemma]{Definition}
\newtheorem{proposition}[lemma]{Proposition}
\newtheorem{theorem}[lemma]{Theorem}
\numberwithin{equation}{section}
\newcommand{\func}[1]{\mathrm{#1}}
\newcommand{\Rdst}{{\mathbb{R}^d}}
\newcommand{\Rst}{{\mathbb{R}}}
\newcommand{\Rtdst}{{\mathbb{R}^{2d}}}
\newcommand{\norm}[1]{\lVert#1\rVert}
\newcommand{\ip}[2]{\ensuremath{\left<#1,#2\right>}}
\newcommand{\abs}[1]{\ensuremath{\left| #1 \right| }}
\newcommand{\newHz}{{\mathsf{h_0}}}
\newcommand{\newHk}{{\mathsf{h_k}}}
\begin{document}
\title[Sharp rates for accumulated spectrograms]{Sharp
rates of convergence for accumulated spectrograms}
\author{Lu\'{\i}s Daniel Abreu}
\address{Acoustics Research Institute, Austrian Academy of Science,
Wohllebengasse 12-14, A-1040, Vienna Austria}
\author{Jo\~ao Pereira}
\address{Program in Applied and Computational Mathematics, Princeton University, New Jersey 08544,
USA}
\author{Jos\'e Luis Romero}
\address{Acoustics Research Institute, Austrian Academy of Science,
Wohllebengasse 12-14, A-1040, Vienna Austria}
\subjclass{}
\keywords{}
\thanks{L. D. A. was supported by the Austrian Science Fund (FWF): START-project
FLAME (”Frames and Linear Operators for Acoustical Modeling and Parameter Estimation”, Y 551-N13).
J. M. P. was partially supported by NSF CMMI-1310173, AFOSR FA9550-12-1-0317, and NSF CAREER Award 
CCF-1552131. J. L. R. gratefully acknowledges support from the Austrian Science Fund (FWF): P 29462 - N35.}

\begin{abstract}
We investigate an inverse problem in time-frequency localization:
the approximation of the symbol of a time-frequency localization operator from
partial spectral information by the method of accumulated
spectrograms (the sum of the spectrograms corresponding to large
eigenvalues). We derive a sharp bound for the rate of convergence of the
accumulated spectrogram, improving on recent results.
\end{abstract}

\maketitle

\section{Introduction}

In this article we obtain sharp rates of convergence for the approximation
of the symbol of a time-frequency filter from (phaseless) measurements of its
eigenspectrograms, using the method of accumulated spectrograms. The method
is within the realm of inverse problems of time-frequency localization,
where one aims to recover a localization operator from partial spectral
information.

\subsection{Time-frequency localization}

In several branches of signal processing, signals change their frequency
properties over time. It is the case of acoustical signals, such as music,
where frequency variation is perceived as melody. Since the signal's Fourier
transform provides frequency information without localization in time, it is
often preferable to represent a signal simultaneously in the time and
frequency domain. \emph{The short-time Fourier transform} of a function
$f:{\mathbb{R}^{d}}\rightarrow {\mathbb{C}}$ is defined with the aid of a
\emph{window} function $g:{\mathbb{R}^{d}}\rightarrow {\mathbb{C}}$ as
follows:
\begin{equation*}
V_{g}f(z)=\int_{\mathbb{R}^{d}}f(t)\overline{g(t-x)}e^{-2\pi i\xi
t}dt,\qquad z=(x,\xi )\in {\mathbb{R}^{2d}}\text{.}
\end{equation*}
The \emph{spectrogram of} $f$, defined as
\begin{equation*}
\func{Spec}(f)(x,\xi ):=\left\vert V_{g}f(x,\xi )\right\vert ^{2}\text{,}
\end{equation*}
measures the intensity of the contribution to $f$ of the frequency $\xi $
near $x$. Time concentration for $f$ corresponds to decay of $\func{Spec}(f)(x,\xi )$ in $x$, frequency concentration 
for $f$ corresponds to decay of
$\func{Spec}(f)(x,\xi )$ in $\xi $, and simultaneous concentration in time
and frequency corresponds to decay of $\func{Spec}(f)(x,\xi )$ in $(x,\xi )$.\footnote{The spectrogram depends on the underlying window $g$; when we need to stress
this dependence we write $\func{Spec}_g(f)$.}

Since a signal can only be observed and processed when concentrated within a
bounded region of the time-frequency plane, a common practice in signal processing is to use a
time-frequency filter that selects the portion of the spectrogram of a signal that is mostly
concentrated on a given domain.

\subsection{Localization operators}

As a mathematical model of a time-frequency filter, Daubechies suggested an
analogy to the Landau-Pollack-Slepian theory of prolate spheroidal functions
\cite{lapo61,lapo62,posl61,la67,Slepian}. This led to the following notion of \emph{localization operator}
${H_{\Omega }}$ acting on a signal $f$. Given a
compact set $\Omega \subseteq {\mathbb{R}^{2d}}$ and a function $f:{\mathbb{R}^{d}}\rightarrow
{\mathbb{C}}$, let
\begin{equation}
{H_{\Omega }}f(t)=\int_{{\mathbb{R}^{2d}}}1_{\Omega }(x,\xi )V_{g}f(x,\xi
)g(t-x)e^{2\pi i\xi t}dxd\xi \text{,}\qquad t\in {\mathbb{R}^{d}}\text{.}
\label{localization}
\end{equation}
The indicator function $1_{\Omega }(x,\xi )$ is called the symbol of ${H_{\Omega }}$. The
spectrogram of ${H_{\Omega }}f$ is an approximation of $1_{\Omega } \cdot \func{Spec}(f)$ - while
$1_{\Omega } \cdot \func{Spec}(f)$ does not in general correspond to the spectrogram of any
signal.

More generally, it is usual to consider time-frequency localization operators associated with
a general symbol $m \in L^\infty(\Rtdst)$
\begin{align}
\label{eq_tf_loc}
{H_{m}}f(t)=\int_{{\mathbb{R}^{2d}}}m(x,\xi )V_{g}f(x,\xi
)g(t-x)e^{2\pi i\xi t}dxd\xi \text{,}\qquad t\in {\mathbb{R}^{d}}\text{.}
\end{align}
These operators have been studied from the perspective of pseudodifferential calculus
\cite{herato94, herato97, cogr03, teo}.

If $\Omega \subseteq \Rtdst$ is compact, then ${H_{\Omega }}$ is a compact and positive operator on
$L^{2}(\mathbb{R}^{d})$ \cite{bocogr04,cogr03,doro14,ro12}. Hence ${H_{\Omega }}$
can be diagonalized as
\begin{equation*}
{H_{\Omega }}f=\sum_{k\geq 1}{\lambda _{k}^{\Omega }}\left\langle
f,h_{k}^{\Omega }\right\rangle h_{k}^{\Omega },\qquad f\in {L^{2}({\mathbb{R}
^{d}})}\text{,}
\end{equation*}
where $\left\{ {\lambda _{k}^{\Omega }}:k\geq 1\right\} $ are the non-zero
eigenvalues of ${H_{\Omega }}$ ordered non-increasingly and $\left\{
h_{k}^{\Omega }:k\geq 1\right\} $ are the corresponding orthonormal
eigenfunctions. The quality of ${H_{\Omega }}$ as a simultaneous cut-off in
the time-frequency variables can be described by its spectral properties,
because the ($L^{2}$-normalized)
eigenfunctions of $\left\{ h_{k}^{\Omega }:k\geq 1\right\} $ of ${H_{\Omega }
}$ maximize the time-frequency concentration of the spectrogram within $\Omega $. Indeed, since
\begin{equation*}
\left\langle H_{\Omega }f,f\right\rangle =\int_{\Omega }\left\vert
V_{g}f(x,\xi )\right\vert ^{2}dxd\xi \text{,}
\end{equation*}
the min-max lemma for self-adjoint operators gives:
\begin{equation}
\label{eq_mm}
\lambda _{k}^{\Omega }=\max \left\{ \int_{\Omega }\left\vert V_{g}f(x,\xi
)\right\vert ^{2}dxd\xi :\left\Vert f\right\Vert =1,f\perp h_{1}^{\Omega
},...,h_{k-1}^{\Omega }\right\} \text{.}
\end{equation}
Thus, the eigenfunctions $h^\Omega_k$ have short-time Fourier transforms that are optimally
concentrated on the target domain $\Omega$ in the $L^2$ sense. Time-frequency concentration
can also be considered with respect to other metrics, and the fundamental results
are contained in various \emph{uncertainty principles} - see e.g.
\cite{li90-1,rito12,rito13,oltokr13, grma13}.

\subsection{Inverse problems in time-frequency localization}

The spectral problem of time-frequency localization presented in the
previous section consists in describing the eigenfunctions and eigenvalues
of ${H_{\Omega }}$. The corresponding inverse problem
consist in recovering the different ingredients of ${H_{\Omega }}$ (the
window $g$ or the domain $\Omega $) from (partial) spectral information.
Thus, the inverse TF localization problem is a special case of the more general problem of
channel identification \cite{ka62,be69,pf08-1, pfwa06}.

When $g$ is a Gaussian window and $\Omega $ is a disk, the eigenfunctions
of ${H_{\Omega }}$ are Hermite functions, and the corresponding eigenvalues
have an explicit expression (see Section \ref{sec_ex} for
more details). For the corresponding inverse problem, the following was
established in \cite{abdo12}.

\begin{theorem}[\protect\cite{abdo12}]
\label{th_abdo} Let $g(t) := 2^{1/4} e^{-\pi t^2}$, $t \in {\mathbb{R}}$, be
the one-dimensional Gaussian and let $\Omega \subseteq {\mathbb{R}}^2$ be
compact and simply connected. If one of the eigenfunctions of ${H_\Omega}$
is a Hermite function, then $\Omega$ is a disk centered at $0$.
\end{theorem}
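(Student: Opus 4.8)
The plan is to pass to the Bargmann--Fock model, in which $H_\Omega$ becomes a Toeplitz operator and the Hermite functions become monomials, to convert the eigenfunction relation into a quadrature identity for $\Omega$, and finally to invoke a mean-value/quadrature rigidity theorem to force $\Omega$ to be a disk centered at the origin.

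First I would apply the Bargmann transform $B$, which maps $\LtRd$ (with $d=1$) unitarily onto the Fock space of entire functions with $\norm{F}^2=\int_{\bC}\abs{F(z)}^2e^{-\pi\abs{z}^2}\,dA(z)$, sends the $n$-th Hermite function $h_n$ to the monomial $e_n(z)=(\pi^n/n!)^{1/2}z^n$, and conjugates $H_\Omega$ into the Toeplitz operator $T_\Omega F=P(1_\Omega F)$, where $P$ is the orthogonal projection onto Fock space with reproducing kernel $e^{\pi z\bar w}$, so that $T_\Omega F(z)=\int_\Omega F(w)e^{\pi z\bar w}e^{-\pi\abs{w}^2}\,dA(w)$. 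Expanding $e^{\pi z\bar w}$ in powers of $z$, the relation $T_\Omega e_n=\lambda e_n$ holds if and only if every power-series coefficient except the $n$-th vanishes, i.e.
\[ \int_\Omega w^n\bar w^{\,m}e^{-\pi\abs{w}^2}\,dA(w)=0,\qquad m\ge 0,\ m\ne n, \]
the case $m=n$ returning the eigenvalue $\lambda=\tfrac{\pi^n}{n!}\int_\Omega\abs{w}^{2n}e^{-\pi\abs{w}^2}\,dA$. By linearity these identities are equivalent to the single \emph{quadrature identity}
\[ \int_\Omega \bar w^{\,n}\,\phi(w)\,e^{-\pi\abs{w}^2}\,dA(w)=\frac{c}{n!}\,\phi^{(n)}(0),\qquad c:=\int_\Omega\abs{w}^{2n}e^{-\pi\abs{w}^2}\,dA>0, \]
valid for \emph{every} entire function $\phi$.

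Second, I would read this potential-theoretically. Let $\nu$ be the complex measure $d\nu(w)=\bar w^{\,n}1_\Omega(w)e^{-\pi\abs{w}^2}\,dA(w)$. Since $\Omega$ is compact, its Cauchy transform $C\nu(z)=\int(z-w)^{-1}\,d\nu(w)$ is analytic off $\overline\Omega$, and expanding at infinity gives $C\nu(z)=\sum_{m\ge0}z^{-m-1}\int w^m\,d\nu(w)=c\,z^{-n-1}$ on the unbounded component of $\bC\setminus\overline\Omega$. Thus, as seen from outside, the Gaussian-weighted mass on $\Omega$ is indistinguishable from a single multipole of order $n$ at the origin: $\Omega$ is a quadrature domain for the Gaussian weight with one node at $0$. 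A centered disk has this property (its weight $\abs{V_g h_n}^2$ is radial, and by Newton's theorem a radial mass looks like a point source from outside), so the content of the theorem is precisely the converse rigidity.

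Third, I would deduce that $\Omega$ is a centered disk from this exterior identity. When $n=0$ the quadrature identity is exactly the Gaussian mean-value property $\int_\Omega\phi\,e^{-\pi\abs{w}^2}\,dA=c\,\phi(0)$, so the exterior logarithmic potential of $1_\Omega e^{-\pi\abs{w}^2}\,dA$ coincides with that of $c\,\delta_0$; a balayage/maximum-principle uniqueness argument, the weighted analogue of Kuran's theorem characterizing balls by the mean-value property, together with simple connectedness (which excludes multiply connected quadrature domains) and compactness, forces $\Omega=D_R$. For $n\ge1$ the same scheme applies to the higher-order node $c\,z^{-n-1}$. I expect the crux to be exactly this last rigidity step: the moment data are \emph{thin}, since for each angular frequency $k$ the identities constrain only one or two weighted radial moments of the slice $\int_0^{2\pi}1_\Omega(re^{i\theta})e^{ik\theta}\,d\theta$, far too few to deduce radial symmetry from the linear information alone. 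The rigidity must therefore be extracted from the \emph{nonlinear} constraint that the symbol is an indicator ($1_\Omega^2=1_\Omega$) together with the topological hypothesis, through the potential-theoretic uniqueness; adapting the Euclidean argument to the Gaussian weight and to the higher-order node $n\ge1$ is the part I anticipate will demand the most care.
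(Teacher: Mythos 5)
Your Step 1 is correct and matches the strategy of the cited proof: this theorem is quoted by the paper from \cite{abdo12} without proof, and the argument there does begin exactly as you do, passing to the Fock space via the Bargmann transform, where $H_\Omega$ becomes the Toeplitz operator with symbol $1_\Omega$, the monomials $e_n$ form an orthonormal basis, and the eigenfunction hypothesis is equivalent to the moment conditions $\int_\Omega w^n\bar w^{\,m}e^{-\pi|w|^2}\,dA=0$ for $m\neq n$. The genuine gap is that your proposal stops where the theorem actually lives: your third step is an announcement, not a proof. You appeal to an unspecified ``weighted analogue of Kuran's theorem,'' but no off-the-shelf result of this kind applies: Kuran's theorem requires the mean-value identity for \emph{all} integrable harmonic functions, whereas you only have entire test functions against a Gaussian weight. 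Worse, the exterior data you propose to exploit cannot suffice even in principle: for \emph{any} rotation-invariant compact $\Omega$, the operator $H_\Omega$ (Gaussian window) commutes with the rotation group generated by the harmonic oscillator, so every Hermite function is an eigenfunction; in particular a centered annulus satisfies every one of your moment identities and produces exactly the same exterior Cauchy transform $c\,z^{-n-1}$. Hence simple connectedness must be wired into the uniqueness mechanism itself --- it is not, as you suggest, a side hypothesis that merely ``excludes multiply connected quadrature domains'' from a list --- and your proposal supplies no mechanism through which it could act.

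You have also chosen the less tractable normal form. Packaging the data as the \emph{complex} measure $\bar w^{\,n}1_\Omega e^{-\pi|w|^2}\,dA$ with a node of order $n$ at $0$ makes balayage and maximum-principle tools unavailable (they need positivity), and one-node quadrature identities with derivative terms do not characterize disks even in the unweighted case: images of the disk under polynomial conformal maps (cardioid-type domains) satisfy $\int_\Omega\phi\,dA=\sum_j c_j\phi^{(j)}(0)$ without being radial, so ``the same scheme applies to the higher-order node'' has no general theorem behind it. The grouping that works takes $m=n+k$, giving $\int_\Omega \bar z^{\,k}\,|z|^{2n}e^{-\pi|z|^2}\,dA=0$ for all $k\geq 1$: an order-zero, one-node condition for the \emph{positive radial} measure $1_\Omega|z|^{2n}e^{-\pi|z|^2}\,dA$. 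From there the proof in \cite{abdo12} is one-variable complex analysis rather than potential theory (as the present paper itself notes). Setting $G(t)=\int_0^t s^{n}e^{-\pi s}\,ds$, one has $\partial_{\bar z}\bigl(z^{k-1}G(|z|^2)\bigr)=z^{k}|z|^{2n}e^{-\pi|z|^2}$, so Green's theorem converts the moment conditions into $\oint_{\partial\Omega}z^{k-1}G(|z|^2)\,dz=0$ for all $k\geq 1$; simple connectedness enters precisely here, through the Riemann map and Hardy-space (F.\ and M.\ Riesz type) reasoning, to conclude that the real-valued boundary function $G(|z|^2)$ is the boundary trace of a function analytic in $\Omega$, hence constant; strict monotonicity of $G$ then forces $|z|$ to be constant on $\partial\Omega$, so $\Omega$ is a disk centered at $0$ --- and this argument visibly fails on the annulus, as it must. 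In short: your reduction coincides with the cited proof, but the rigidity step, which is the entire content of the theorem, is missing, and the potential-theoretic route you sketch for it faces obstructions (complex measure, higher-order node, exterior-data degeneracy) that the complex-analytic argument is specifically designed to avoid.
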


Hence, for a Gaussian window $g$, the localization domain $\Omega $ is
completely determined by the information that $\Omega \subseteq {\mathbb{R}^{2d}}$ is a simply
connected set with given measure $|\Omega |$ and that
one of the eigenfunctions of ${H_{\Omega }}$ is a Hermite function. However,
the stylized assumptions of the Theorem \ref{th_abdo} restrict its use in real
applications. First, the restriction on the window $g$ is significant: the
result holds only for Gaussian windows. It is not clear at all how to adapt the
proofs in \cite{abdo12} to more general situations, since they depend on one
variable complex analysis methods, which only apply to Gaussian windows \cite{AscBru}. In addition,
Theorem \ref{th_abdo} does not offer numerical stability:
from the information that the eigenmodes of a time-frequency localization
operator ${H_{\Omega }}$ look approximately like Hermite functions, one
cannot conclude that the localization domain $\Omega $ is approximately a
disk.

A more feasible approach to the approximate recovery of the localization
domain from partial spectral information has been developed in \cite{AGR16},
based on the concept of accumulated spectrogram. It provides a method for
approximating the symbol of a localization operator from the intensities of
the time-frequency representations of its eigenfunctions. Here,
the chosen window $g$ plays only a minor role.

\subsection{Symbol retrieval and the accumulated spectrogram}

Suppose we can measure the spectrograms of the first eigenfunctions
$h_{1}^{\Omega },\ldots ,h_{N}^{\Omega }$ of the eigenvalue problem associated with ${H_{\Omega
}}$ (that is, the eigenfunctions with higher time-frequency energy within the domain $\Omega$ - cf.
\eqref{eq_mm}). The following is the central notion of the article.

\begin{definition}
\label{def}
Let $A_{\Omega }$ be the smallest integer greater than or equal to $\left\vert \Omega
\right\vert $. The \emph{accumulated spectrogram} associated with $g$ and $\Omega $ is:
\begin{equation}
\rho _{g,\Omega }(z):=
\sum_{k=1}^{A_{\Omega }} \func{Spec}_g \left(h_k^\Omega \right)(z)=
\sum_{k=1}^{A_{\Omega }}\left\vert V_{g}h_{k}^{\Omega
}(z)\right\vert ^{2},\qquad z\in {\mathbb{R}^{2d}}\text{.}
\label{accumulated}
\end{equation}
(When the underlying window $g$ is clear from the context, we write
$\rho_{\Omega }$ instead of $\rho _{g,\Omega }$.)
\end{definition}

The idea of adding several uncorrelated spectrograms is not new. It is a basic tool in
spectral estimation \cite{BB}, it has been applied to the analysis of brain
signals \cite{Brain2014} and it is also an important step in the recent
high-resolution time-frequency algorithm ConceFT \cite{ConceFT}. The accumulated spectrogram has also been investigated 
in non-Euclidean contexts \cite{gh1} - see also \cite{hut}. Numerical
experiments show that when $N$ is close to the critical value $\approx \left\vert \Omega \right\vert $, the sum of the
$N$ spectrograms that are most concentrated on $\Omega$ almost exhaust the domain $\Omega$;
i.e., the accumulated spectrogram looks approximately like $1_\Omega$.
The following result from \cite{AGR16} provides a rigorous formulation of this observation.

\begin{theorem}
\cite[Theorem 1.3]{AGR16} Let $g\in L^{2}({\mathbb{R}^{d})}$, $\left\Vert
g\right\Vert _{2}=1$, and let $\Omega \subset {\mathbb{R}^{2d}}$ be compact.
Then, in $L^{1}({\mathbb{R}^{2d})}$,
\begin{equation}
\lim_{R\rightarrow \infty }\rho _{R\cdot \Omega }(R \cdot)\rightarrow 1_{\Omega }\text{.}
\label{asymlim}
\end{equation}
\end{theorem}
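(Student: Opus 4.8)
The plan is to compare the accumulated spectrogram $\rho_\Omega$ against the \emph{eigenvalue-weighted} variant
\[
\widetilde\rho_\Omega(z) := \sum_{k\ge 1}\lambda_k^\Omega\,\abs{V_g h_k^\Omega(z)}^2,
\]
which, unlike $\rho_\Omega$, has a clean closed form. Writing $\pi(z)g(t)=e^{2\pi i\xi t}g(t-x)$ for $z=(x,\xi)$, so that $V_gf(z)=\ip{f}{\pi(z)g}$, I would first observe that $\widetilde\rho_\Omega(z)=\sum_k\lambda_k^\Omega\,\abs{\ip{h_k^\Omega}{\pi(z)g}}^2=\ip{H_\Omega\pi(z)g}{\pi(z)g}$. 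Feeding this into the identity $\ip{H_\Omega\varphi}{\varphi}=\int_\Omega\abs{V_g\varphi(w)}^2\,dw$ with $\varphi=\pi(z)g$, and using the covariance relation $\abs{\ip{\pi(z)g}{\pi(w)g}}=\abs{V_gg(w-z)}$, yields
\[
\widetilde\rho_\Omega=1_\Omega*\gamma,\qquad \gamma(u):=\abs{V_gg(-u)}^2,\quad \int_{\Rtdst}\gamma=\norm{g}_2^4=1 .
\]
Thus $\widetilde\rho_\Omega$ is simply $1_\Omega$ mollified by a fixed probability density $\gamma$. The proof then splits into showing that (i) the smoothed object converges, and (ii) the weighted and unweighted spectrograms are $L^1$-close after rescaling.

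For the smoothed object, I would use the dilation in \eqref{asymlim}: a change of variables turns $\widetilde\rho_{R\Omega}(R\,\cdot)$ into $1_\Omega*\gamma_R$, where $\gamma_R(u)=R^{2d}\gamma(Ru)$ is an approximate identity. Since $1_\Omega\in L^1(\Rtdst)$, continuity of translation in $L^1$ gives $1_\Omega*\gamma_R\to 1_\Omega$; this part is routine. It remains to control $R^{-2d}\norm{\rho_{R\Omega}-\widetilde\rho_{R\Omega}}_{L^1}$. Since each $\abs{V_gh_k}^2$ has unit $L^1$ mass by Moyal's identity, the triangle inequality gives
\[
\norm{\rho_{R\Omega}-\widetilde\rho_{R\Omega}}_{L^1}\le \sum_{k\le A_{R\Omega}}(1-\lambda_k^{R\Omega})+\sum_{k> A_{R\Omega}}\lambda_k^{R\Omega},
\]
so I need the right-hand side to be $o(R^{2d})$, recalling $\abs{R\Omega}=R^{2d}\abs{\Omega}$.

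The engine for this is the trace estimate
\[
\sum_k\lambda_k^{R\Omega}\bigl(1-\lambda_k^{R\Omega}\bigr)=\mathrm{tr}(H_{R\Omega})-\mathrm{tr}(H_{R\Omega}^2)=\abs{R\Omega}-\norm{H_{R\Omega}}_{HS}^2 .
\]
Writing the localization operator as $H_{R\Omega}=\int_{R\Omega}\pi(w)g\otimes\pi(w)g\,dw$, one computes $\norm{H_{R\Omega}}_{HS}^2=\int_{R\Omega}\int_{R\Omega}\abs{V_gg(w-w')}^2\,dw\,dw'$; rescaling this double integral and invoking the same approximate identity $\gamma_R$ shows $R^{-2d}\norm{H_{R\Omega}}_{HS}^2\to\abs{\Omega}$, hence $\sum_k\lambda_k^{R\Omega}(1-\lambda_k^{R\Omega})=o(R^{2d})$. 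Using $\min(\lambda,1-\lambda)\le 2\lambda(1-\lambda)$, the counts $a=\#\{\lambda_k^{R\Omega}>3/4\}$ and $b=\#\{\lambda_k^{R\Omega}\ge 1/4\}$ then satisfy $b-a=o(R^{2d})$, and summing the eigenvalues gives $a=\abs{R\Omega}+o(R^{2d})=A_{R\Omega}+o(R^{2d})$. Monotonicity of the $\lambda_k^{R\Omega}$ forces the cutoff index $A_{R\Omega}$ to sit within $o(R^{2d})$ of the spectral crossover, so both the "missing mass" $\sum_{k\le A_{R\Omega}}(1-\lambda_k^{R\Omega})$ and the "leaked mass" $\sum_{k> A_{R\Omega}}\lambda_k^{R\Omega}$ are $o(R^{2d})$.

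Combining the two pieces through the triangle inequality then yields $\rho_{R\Omega}(R\,\cdot)\to 1_\Omega$ in $L^1(\Rtdst)$. The main obstacle is precisely the spectral clustering step: one must rule out a non-negligible plunge region and confirm that $A_{R\Omega}$ tracks the number of eigenvalues near $1$. The trace identity, together with the rescaling limit $R^{-2d}\norm{H_{R\Omega}}_{HS}^2\to\abs{\Omega}$, is what makes this quantitative and is the technical heart of the argument.
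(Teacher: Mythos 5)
Your argument is correct (the sketched counting step fills in routinely, e.g.\ using $1-\lambda\leq 4\lambda(1-\lambda)$ for $\lambda\geq 1/4$ and $\lambda\leq 4\lambda(1-\lambda)$ for $\lambda\leq 3/4$, together with monotonicity of the eigenvalues), but note that this paper does not prove the statement itself: it is quoted from \cite{AGR16}, and your proposal essentially reconstructs the proof given there — the identity $\widetilde\rho_\Omega(z)=\ip{H_\Omega\pi(z)g}{\pi(z)g}=\bigl(1_\Omega\ast\abs{V_gg}^2\bigr)(z)$, the Moyal-based bound $\norm{\rho_\Omega-\widetilde\rho_\Omega}_{L^1}\leq\sum_{k\leq A_\Omega}(1-\lambda_k^\Omega)+\sum_{k>A_\Omega}\lambda_k^\Omega$, and Weyl-type control of the plunge region through $\mathrm{trace}(H_\Omega)-\mathrm{trace}(H_\Omega^2)$ are exactly the ingredients of \cite{AGR16}. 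It is worth comparing with the machinery this paper uses for the quantitative Theorem \ref{th_one_point}, because Steps 2 and 3 there show your most delicate step can be short-circuited: the identities \eqref{eq_1} and \eqref{eq_2}--\eqref{eq_3} compute the $L^1$ error \emph{exactly} as $\bigl(\abs{\Omega}-\sum_{k\leq A_\Omega}\lambda_k^\Omega\bigr)+\bigl(A_\Omega-\sum_{k\leq A_\Omega}\lambda_k^\Omega\bigr)$, so no triangle-inequality comparison with $\widetilde\rho_\Omega$ is needed, and the algebraic estimate of Step 2 bounds $\abs{\Omega}-\sum_{k\leq A_\Omega}\lambda_k^\Omega$ directly by the trace deficit $\mathrm{trace}(H_\Omega)-\mathrm{trace}(H_\Omega^2)$, with no case analysis on how the cutoff index $A_\Omega$ sits relative to the spectral crossover. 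Feeding in your rescaling limit $R^{-2d}\bigl(\mathrm{trace}(H_{R\Omega})-\mathrm{trace}(H_{R\Omega}^2)\bigr)\to 0$ — which is where your approximate identity $\gamma_R$ genuinely earns its keep, replacing Lemma \ref{lemma_var} when $g$ is merely in $L^2(\Rdst)$ rather than in $M^\ast(\Rdst)$ — then yields the qualitative theorem in a few lines. What your route buys is extra structural information of independent interest (the closed form $\widetilde\rho_{R\Omega}(R\,\cdot)=1_\Omega\ast\gamma_R$ and the eigenvalue counts $\#\{k:\lambda_k^{R\Omega}>\delta\}=R^{2d}\abs{\Omega}+o(R^{2d})$ for each fixed $\delta\in(0,1)$); what the paper's route buys is a two-line reduction of the $L^1$ error to the trace deficit, which is precisely what makes the sharp rate $O(\abs{\partial\Omega})$ of Theorem \ref{th_one_point} attainable.
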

This shows that a large domain - $R \Omega$, with $R \gg 1$ -
can be approximated, by sensing the intensity of the corresponding
eigenspectrograms, \emph{without additional knowledge of the window}.
However, such a conclusion is only asymptotic. In practice, one needs a quantitative approximation
estimate. In other words, we want to measure the rate of
convergence to the limit in (\ref{asymlim}). We assume that $g$ satisfies
the following time-frequency concentration condition:
\begin{equation}
\lVert g\rVert _{M^{\ast }}^{2}:=\int_{\mathbb{R}^{2d}}\left\vert
z\right\vert \left\vert V_{g}g(z)\right\vert ^{2}dz<+\infty,
\label{eq_modstar}
\end{equation}
and let ${M^{\ast }}({\mathbb{R}^{d}})$ denote the class of all $L^{2}({\mathbb{R}^{d}})$ functions
satisfying \eqref{eq_modstar}. The following has
been obtained in \cite{AGR16}.

\begin{theorem}[\cite{AGR16}] Let $g\in {M^{\ast }}({\mathbb{R}^{d}})$ with $\left\Vert g\right\Vert
_{2}=1$ and $\Omega \subset \mathbb{R}^{2d}$ a
compact set with finite perimeter. Then
\begin{equation}
\left\Vert \rho _{g,\Omega }-1_{\Omega }\right\Vert _{L^{1}({\mathbb{R}^{2d}})}\leq
C_{g}\sqrt{\left\vert \partial {\Omega }\right\vert }\sqrt{{|\Omega |}}\text{,}  \label{estTAMS}
\end{equation}
where $\left\vert \partial {\Omega }\right\vert $ is the perimeter of $\Omega $ and $C_{g}$ is a
constant that only depends on $g$.
\end{theorem}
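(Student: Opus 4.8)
The plan is to reduce the $L^1$ error to an explicit eigenvalue sum, bound the spectral defect $\sum_k\lambda_k^\Omega(1-\lambda_k^\Omega)$ by the perimeter, and then pass from this defect to the error sum by Cauchy--Schwarz. First I would record an exact formula for the error. Writing $P=\sum_{k=1}^{A_\Omega}h_k^\Omega\otimes h_k^\Omega$ for the orthogonal projection onto the span of the leading eigenfunctions, and $\pi(z)$ for the time-frequency shift, one has $V_gh_k^\Omega(z)=\langle h_k^\Omega,\pi(z)g\rangle$, so that $\rho_{g,\Omega}(z)=\langle P\,\pi(z)g,\pi(z)g\rangle$ and hence $0\le\rho_{g,\Omega}\le\norm{g}_2^2=1$. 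Since $1_\Omega$ takes only the values $0$ and $1$, the integrand $|\rho_{g,\Omega}-1_\Omega|$ equals $1-\rho_{g,\Omega}$ on $\Omega$ and $\rho_{g,\Omega}$ on the complement, each of constant sign. Integrating and using $\int_\Omega\langle A\pi(z)g,\pi(z)g\rangle\,dz=\mathrm{tr}(A\,H_\Omega)$ together with $\int_{\Rtdst}\langle A\pi(z)g,\pi(z)g\rangle\,dz=\mathrm{tr}(A)$, I obtain the identity
\begin{equation*}
\norm{\rho_{g,\Omega}-1_\Omega}_{L^1(\Rtdst)}=\sum_{k\le A_\Omega}(1-\lambda_k^\Omega)+\sum_{k>A_\Omega}\lambda_k^\Omega .
\end{equation*}

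Next I would bound the spectral defect $S:=\sum_{k\ge1}\lambda_k^\Omega(1-\lambda_k^\Omega)=\mathrm{tr}(H_\Omega)-\mathrm{tr}(H_\Omega^2)$. Using $\mathrm{tr}(H_\Omega)=|\Omega|$ and the standard identity $\mathrm{tr}(H_\Omega^2)=\int_\Omega\int_\Omega|V_gg(z-w)|^2\,dz\,dw$ (which follows from $|\langle\pi(z)g,\pi(w)g\rangle|^2=|V_gg(z-w)|^2$), together with $\int_{\Rtdst}|V_gg(z-w)|^2\,dw=\norm{g}_2^4=1$, I can write
\begin{equation*}
S=\int_{\Omega}\int_{\Omega^c}|V_gg(z-w)|^2\,dw\,dz=\int_{\Rtdst}|V_gg(u)|^2\,\bigl|\Omega\setminus(\Omega-u)\bigr|\,du .
\end{equation*}
The finite-perimeter translation estimate $|\Omega\triangle(\Omega-u)|\le|u|\,|\partial\Omega|$ then gives $S\le|\partial\Omega|\int_{\Rtdst}|u|\,|V_gg(u)|^2\,du=\norm{g}_{M^*}^2\,|\partial\Omega|=:C_g|\partial\Omega|$.

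Finally I would pass from $S$ to the error. Among all cut-offs $M$, the quantity $\sum_{k\le M}(1-\lambda_k^\Omega)+\sum_{k>M}\lambda_k^\Omega$ is smallest at the value cut-off $m:=\#\{k:\lambda_k^\Omega>\tfrac12\}$, where it equals $\Sigma:=\sum_k\min(\lambda_k^\Omega,1-\lambda_k^\Omega)$; moreover $|A_\Omega-m|\le1+\Sigma$, since $\sum_k\lambda_k^\Omega-m=\sum_{\lambda_k^\Omega\le1/2}\lambda_k^\Omega-\sum_{\lambda_k^\Omega>1/2}(1-\lambda_k^\Omega)$ has absolute value at most $\Sigma$. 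Hence the error is at most $2\Sigma+1$, and it remains to bound $\Sigma$. Splitting the spectrum at $\tfrac12$: the eigenvalues $\le\tfrac12$ contribute $\sum_{\lambda_k^\Omega\le1/2}\lambda_k^\Omega\le2S$ because $\lambda\le2\lambda(1-\lambda)$ there, while the eigenvalues $>\tfrac12$ number at most $2|\Omega|$ (each exceeds $\tfrac12$ and $\sum_k\lambda_k^\Omega=|\Omega|$), so Cauchy--Schwarz with $(1-\lambda)^2\le\lambda(1-\lambda)$ gives $\sum_{\lambda_k^\Omega>1/2}(1-\lambda_k^\Omega)\le\sqrt{2|\Omega|}\,\sqrt S$. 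Using $S\le|\Omega|$ to absorb the linear term, I conclude $\norm{\rho_{g,\Omega}-1_\Omega}_{L^1(\Rtdst)}\lesssim\sqrt{|\Omega|}\,\sqrt S\le C_g\sqrt{|\partial\Omega|}\,\sqrt{|\Omega|}$.

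I expect the main obstacle to be this last step. The accumulated spectrogram is built from the \emph{sharp} projection $P$, whereas the perimeter only controls the \emph{smooth} defect $S$; the discrepancy is concentrated on the eigenvalues lying in the transition (``plunge'') region, and bounding their number merely by $2|\Omega|$ is exactly what forces the non-sharp factor $\sqrt{|\Omega|}$. Replacing this crude count by the pointwise bound $\min(\lambda,1-\lambda)\le C\,\lambda(1-\lambda)$ across the whole spectrum—which requires genuinely controlling how few eigenvalues stay bounded away from $0$ and $1$—is what would remove $\sqrt{|\Omega|}$ and upgrade the estimate to the sharp linear rate.
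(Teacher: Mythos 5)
Your argument is sound in its ingredients, and those ingredients coincide with the machinery this paper uses to prove the stronger Theorem \ref{th_one_point} (the statement you proved, i.e.\ the estimate \eqref{estTAMS}, is only quoted here from \cite{AGR16}, not reproved): the trace identities \eqref{eq_trace1}--\eqref{eq_trace2}; the perimeter control of the smoothing defect (your translation bound $\abs{\Omega\triangle(\Omega-u)}\le\abs{u}\,\abs{\partial\Omega}$ is precisely what underlies Lemma \ref{lemma_var}, since $1_\Omega\ast\abs{V_gg}^2$ is an average of translates of $1_\Omega$, so your Step 2 and the paper's \eqref{eq_dev_1} are the same estimate); and the exact error identity $\norm{\rho_\Omega-1_\Omega}_{L^1}=\sum_{k\le A_\Omega}(1-\lambda_k^\Omega)+\sum_{k>A_\Omega}\lambda_k^\Omega$, which is \eqref{eq_1}--\eqref{eq_3} and reappears verbatim in the proof of Theorem \ref{th_sharp}. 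Where you genuinely differ is the bookkeeping that converts the defect $S=\mathrm{trace}(H_\Omega)-\mathrm{trace}(H_\Omega^2)$ into a bound at the cut-off $A_\Omega$: you detour through the value cut-off $m=\#\{k:\lambda_k^\Omega>1/2\}$ and $\Sigma=\sum_k\min(\lambda_k^\Omega,1-\lambda_k^\Omega)$, whereas Step 2 of the paper's proof compares directly at $A_\Omega$, weighting the two tails by $\lambda_{A_\Omega}^\Omega$ and $1-\lambda_{A_\Omega}^\Omega$ to obtain $\abs{\Omega}-\sum_{k\le A_\Omega}\lambda_k^\Omega\le S$ in a single string of inequalities.

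Two substantive remarks. First, your closing diagnosis of ``the main obstacle'' is backwards, and this is worth absorbing: the pointwise inequality $\min(\lambda,1-\lambda)\le 2\lambda(1-\lambda)$ holds for \emph{every} $\lambda\in[0,1]$ --- for $\lambda\le 1/2$ it is your own bound, and for $\lambda\ge 1/2$ it is the symmetric $1-\lambda\le 2\lambda(1-\lambda)$. Hence $\Sigma\le 2S$ unconditionally, and your own chain gives $\norm{\rho_\Omega-1_\Omega}_{L^1}\le 2\Sigma+1\le 4S+1\le 4\norm{g}_{M^*}^2\abs{\partial\Omega}+1$, which is already the sharp linear rate of Theorem \ref{th_one_point}. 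No control over the cardinality of the plunge region is needed for the upper bound; that spectral information (Proposition \ref{prop_eig_2}) enters only in the matching lower bound of Theorem \ref{th_sharp}. The lossy factor $\sqrt{\abs{\Omega}}$ is created solely by your choice to run Cauchy--Schwarz with the crude count $m\le 2\abs{\Omega}$ on the eigenvalues above $1/2$, instead of the equally trivial symmetric half of the pointwise inequality. Second, in your final line you silently drop an additive $+1$ (inherited from $A_\Omega=\ceil{\abs{\Omega}}$ via $\abs{A_\Omega-m}\le 1+\Sigma$); it cannot be absorbed into $\sqrt{\abs{\partial\Omega}\,\abs{\Omega}}$ in general, and indeed the statement read literally fails without a normalization: for a tiny ball, $A_\Omega=1$ forces $\norm{\rho_\Omega-1_\Omega}_{L^1}\ge 1-\abs{\Omega}$ while $\sqrt{\abs{\partial\Omega}\,\abs{\Omega}}\to 0$. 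This is exactly why Theorem \ref{th_one_point} carries the hypothesis $\abs{\partial\Omega}\ge 1$; with an analogous normalization (e.g.\ $\abs{\partial\Omega}\,\abs{\Omega}\gtrsim 1$) your argument is complete, and, suitably reassembled as above, it in fact proves the sharper theorem.
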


The main result of this article is the following improvement of (\ref{estTAMS}).

\begin{theorem}
\label{th_one_point}
Let $g\in {M^{\ast }}({\mathbb{R}^{d}})$ with $\left\Vert g\right\Vert
_{2}=1 $ and $\Omega \subset \mathbb{R}^{2d}$ a compact set with finite
perimeter and $\left\vert \partial {\Omega }\right\vert \geq 1$. Then
\begin{equation*}
\left\Vert \rho _{g,\Omega }-1_{\Omega }\right\Vert _{L^{1}({\mathbb{R}^{2d}})}\leq C_{g}\left\vert
\partial {\Omega }\right\vert \text{.}
\end{equation*}
\end{theorem}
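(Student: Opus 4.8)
The plan is to recast the whole statement in operator-theoretic terms through the Berezin (covariant) transform and reduce the $L^1$ estimate to a single boundary-layer trace computation. Write $\pi(z)g(t)=\e{\xi t}g(t-x)$ for $z=(x,\xi)$, so that $V_g f(z)=\ip{f}{\pi(z)g}$, and recall the reproducing formula $\int_{\Rtdst}\pi(z)g\otimes\pi(z)g\,dz=\mathrm{Id}$, valid since $\norm{g}_2=1$. For a self-adjoint trace-class $T=\sum_k\mu_k\,e_k\otimes e_k$ let $\tilde T(z):=\ip{T\pi(z)g}{\pi(z)g}=\sum_k\mu_k\abs{V_g e_k(z)}^2$ denote its Berezin transform. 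Two facts drive the argument. First, writing $P:=\sum_{k=1}^{A_\Omega}h_k^\Omega\otimes h_k^\Omega$ for the spectral projection onto the top $A_\Omega=\ceil{\abs\Omega}$ eigenspaces, Definition \ref{def} reads exactly $\rho_{g,\Omega}=\tilde P$. Second, from $H_\Omega=\int_\Omega\pi(w)g\otimes\pi(w)g\,dw$ one computes $\tilde H_\Omega=1_\Omega*\abs{V_g g}^2$. Since $P$ and $H_\Omega$ are simultaneously diagonalized by $\{h_k^\Omega\}$, the difference $P-H_\Omega=\sum_k(1_{\{k\le A_\Omega\}}-\lambda_k^\Omega)\,h_k^\Omega\otimes h_k^\Omega$ is again trace class.

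First I would split by the triangle inequality,
\[
\norm{\rho_{g,\Omega}-1_\Omega}_{L^1}\le\norm{\tilde P-\tilde H_\Omega}_{L^1}+\norm{\tilde H_\Omega-1_\Omega}_{L^1}.
\]
For the second term, $\abs{V_g g}^2$ integrates to $1$, so $\tilde H_\Omega-1_\Omega=\int_{\Rtdst}\big(1_\Omega(\cdot-u)-1_\Omega\big)\abs{V_g g(u)}^2\,du$; taking $L^1$ norms inside and using the finite-perimeter (BV) bound $\norm{1_\Omega(\cdot-u)-1_\Omega}_{L^1}=\abs{\Omega\triangle(\Omega+u)}\le\abs{u}\,\abs{\partial\Omega}$ yields $\norm{\tilde H_\Omega-1_\Omega}_{L^1}\le\abs{\partial\Omega}\int_{\Rtdst}\abs{u}\abs{V_g g(u)}^2\,du=\norm{g}_{M^*}^2\abs{\partial\Omega}$, which is exactly where hypothesis \eqref{eq_modstar} is used.

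For the first term I would invoke the elementary domination $\norm{\tilde T}_{L^1}\le\norm{T}_{\mathcal S_1}$ for self-adjoint trace-class $T$ (immediate from $\int_{\Rtdst}\abs{V_g e_k}^2=1$), giving $\norm{\tilde P-\tilde H_\Omega}_{L^1}\le\norm{P-H_\Omega}_{\mathcal S_1}=\sum_k\abs{1_{\{k\le A_\Omega\}}-\lambda_k^\Omega}=a+b$, where $a=\sum_{k\le A_\Omega}(1-\lambda_k^\Omega)$ and $b=\sum_{k>A_\Omega}\lambda_k^\Omega$. The perimeter enters through the plunge quantity $P_\Omega:=\sum_k\lambda_k^\Omega(1-\lambda_k^\Omega)$: from the integral representation of $H_\Omega$ one gets $P_\Omega=\operatorname{tr}(H_\Omega-H_\Omega^2)=\int_\Omega\int_{\Rtdst\setminus\Omega}\abs{V_g g(z-w)}^2\,dw\,dz$, and the same layer estimate as above bounds this by $\norm{g}_{M^*}^2\abs{\partial\Omega}$.

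It remains to compare the sorting cost $a+b$ with $P_\Omega$, and I expect this to be the crux. Using that the $\lambda_k^\Omega$ are non-increasing and that $A_\Omega=\ceil{\abs\Omega}$ while $\sum_k\lambda_k^\Omega=\abs\Omega$, one checks $a-b=A_\Omega-\abs\Omega\in[0,1)$, so $a\le b+1$ and it suffices to control $b$. Let $m$ be the number of eigenvalues $\ge\tfrac12$ and split the sums at $m$: the inequalities $\lambda\le 2\lambda(1-\lambda)$ for $\lambda\le\tfrac12$ and $1-\lambda\le 2\lambda(1-\lambda)$ for $\lambda\ge\tfrac12$, combined with $m-A_\Omega\le\sum_{k\le m}(1-\lambda_k^\Omega)$ (which follows from $\sum_{k\le m}\lambda_k^\Omega\le\abs\Omega\le A_\Omega$), give $b\le 4P_\Omega$ in every case. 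Hence $a+b\le 8P_\Omega+1\le(8\norm{g}_{M^*}^2+1)\abs{\partial\Omega}$, the normalization $\abs{\partial\Omega}\ge1$ absorbing the additive constant, and combining with the second term proves the theorem. The main obstacle — and the source of the sharp linear rate — is precisely this last step: the previous $\sqrt{\abs{\partial\Omega}}\sqrt{\abs\Omega}$ bound arises from a Cauchy–Schwarz passage ($L^2$ norm times $\sqrt{\abs{\supp}}\sim\sqrt{\abs\Omega}$), and replacing it by the direct trace-norm domination $\norm{\tilde T}_{L^1}\le\norm{T}_{\mathcal S_1}$ eliminates the spurious $\sqrt{\abs\Omega}$ only at the price of controlling the integer-rounding mismatch between $P$ and $H_\Omega$ purely through the plunge region, which relies delicately on the eigenvalue ordering.
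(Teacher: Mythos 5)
Your proposal is correct --- I verified the Berezin-transform identities $\rho_{g,\Omega}=\tilde P$ and $\tilde H_\Omega = 1_\Omega \ast \abs{V_gg}^2$, the domination $\norm{\tilde T}_{L^1}\leq\norm{T}_{S^1}$, the bookkeeping $a-b=A_\Omega-\abs{\Omega}\in[0,1)$, and the two-case argument for $b\leq 4P_\Omega$, all of which hold --- but it reaches the theorem by a partly different route. Both arguments rest on the same two pillars: the trace identities $\mathrm{trace}(H_\Omega)=\abs{\Omega}$ and $\mathrm{trace}(H_\Omega^2)=\int_\Omega\bigl(1_\Omega\ast\abs{V_gg}^2\bigr)$ combined with Lemma \ref{lemma_var} to get the plunge bound $P_\Omega=\mathrm{trace}(H_\Omega)-\mathrm{trace}(H_\Omega^2)\leq\norm{g}_{M^\ast}^2\abs{\partial\Omega}$, and a comparison of the sorting cost $a+b$ with $P_\Omega$ via the eigenvalue ordering. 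Where you diverge: the paper never takes a triangle inequality through $\tilde H_\Omega$; instead it computes the $L^1$ error \emph{exactly}, using $0\leq\rho_\Omega\leq 1$ and $\int_\Omega\abs{V_gh_k^\Omega}^2\,dz=\lambda_k^\Omega$ to obtain $\int_\Omega\abs{\rho_\Omega-1_\Omega}=b$ and $\int_{\Rtdst\setminus\Omega}\abs{\rho_\Omega-1_\Omega}=a$, so that $\norm{\rho_\Omega-1_\Omega}_{L^1}=a+b$ is an identity --- one the paper reuses verbatim to prove the \emph{lower} bound in Theorem \ref{th_sharp}, which your one-sided domination $\norm{\tilde T}_{L^1}\leq\norm{T}_{S^1}$ cannot deliver; your route instead pays a harmless extra additive term $\norm{g}_{M^\ast}^2\abs{\partial\Omega}$ from $\norm{\tilde H_\Omega-1_\Omega}_{L^1}$. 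For the crux comparison, the paper uses a weighted inequality pivoting on $\lambda_{A_\Omega}^\Omega$ rather than your threshold split at $1/2$: from $\sum_k\lambda_k^\Omega(1-\lambda_k^\Omega)\geq\lambda_{A_\Omega}^\Omega\,a+(1-\lambda_{A_\Omega}^\Omega)\,b = b+\lambda_{A_\Omega}^\Omega(A_\Omega-\abs{\Omega})\geq b$ it gets $b\leq P_\Omega$ with constant $1$, against your $b\leq 4P_\Omega$; both exploit monotonicity of the eigenvalues, both then use $a\leq b+1$ and $\abs{\partial\Omega}\geq 1$, and both give the same linear rate. Your closing diagnosis of where the earlier $\sqrt{\abs{\partial\Omega}}\sqrt{\abs{\Omega}}$ bound loses is consistent with the improvement actually made here.
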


Similar $L^{1}$ bounds have applications in signal analysis, namely in multi-taper
stabilization of power spectrum estimation \cite{AR}. Besides being technically
related to the main results \cite{AR}, we expect the present results to be instrumental
in non-stationary multi-taper estimation \cite{BB}.

\subsection{Phaseless approximation of time-frequency filters}
The accumulated spectrogram is defined in terms of the \emph{absolute value}
of the short-time Fourier transform of the most significant eigenfunctions $V_g h^\Omega_1, \ldots,
V_g h^\Omega_{A_\Omega}$, and is thus
related to the \emph{phase retrieval} problem - see e.g. \cite{jam, Jam07, babocaed09, phase}. Indeed,
for a Schwartz-class window $g$, time-frequency localization operators (with
general symbols as in \eqref{eq_tf_loc}) satisfy a trace-norm estimate
\begin{align*}
\norm{H_m}_{S^1} \lesssim \norm{m}_{L^1},
\end{align*}
where $S^1$ denotes the trace norm - see e.g. \cite{cogr03, teo}. Therefore, the estimate in Theorem
\ref{th_one_point} implies the spectral error bound
\begin{align*}
\norm{H_{\rho_{\Omega}} - H_\Omega}_{S^1}
=
\norm{H_{\rho_{\Omega}-1_\Omega}}_{S^1}
\lesssim \norm{\rho_{\Omega}-1_\Omega}_{L^1} 
\lesssim \abs{\partial \Omega}.
\end{align*}
In this way, the \emph{absolute values} of the short-time Fourier transforms of the eigenfunctions
$\{h^\Omega_k: k \geq 1\}$ lead to a spectral approximation of the operator $H_{\Omega}$.

\subsection{Sharpness of the estimates}
We establish the sharpness of the estimate in Theorem \ref{th_one_point} by testing it on
the family of all Euclidean balls.
\begin{theorem}
\label{th_sharp}
Let $g \in L^2(\Rdst)$ have norm 1 and let $B_R \subset \Rtdst$ be the ball of radius $R>0$
centered at the origin. Then there exist constants $C, C'$ such that
\begin{equation}
\label{eq_Cs}
C R^{2d-1}
\leq \left\Vert \rho _{g,{B_R} }-1_{{B_R} }\right\Vert _{L^{1}({\mathbb{R}^{2d}})}
\leq C' R^{2d-1}, \qquad R>0.
\end{equation}
\end{theorem}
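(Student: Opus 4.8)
The plan is to turn the $L^1$ error into an \emph{exact} expression in the eigenvalues of $H_{B_R}$ and then bound that expression from both sides. Three elementary facts drive this. First, $0\le\rho_{g,B_R}\le1$ pointwise: completing the eigenfunctions $\{h_k^{B_R}\}$ to an orthonormal basis $\{e_j\}$ of $L^2(\Rdst)$ and using $\sum_j|V_g e_j(z)|^2=\|\pi(z)g\|_2^2=\|g\|_2^2=1$ (Parseval, where $\pi(z)g(t)=\e{\xi t}g(t-x)$ and $V_gf(z)=\langle f,\pi(z)g\rangle$) shows the partial sum defining $\rho_{g,B_R}$ lies in $[0,1]$. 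Second, the Moyal relation gives $\|V_g h_k^{B_R}\|_2^2=1$, so $\int\rho_{g,B_R}=A_{B_R}$, while $\int_{B_R}|V_g h_k^{B_R}|^2=\langle H_{B_R}h_k^{B_R},h_k^{B_R}\rangle=\lambda_k^{B_R}$. Third, $\func{tr}(H_{B_R})=\sum_k\lambda_k^{B_R}=|B_R|$. Since $0\le\rho\le1$, on $B_R$ we have $|\rho-1_{B_R}|=1-\rho$ and on $B_R^c$ we have $|\rho-1_{B_R}|=\rho$; splitting the integral and using the facts above,
\[
\int_{B_R}(1-\rho_{g,B_R})=\sum_{k>A_{B_R}}\lambda_k^{B_R},\qquad
\int_{B_R^c}\rho_{g,B_R}=\sum_{k=1}^{A_{B_R}}\bigl(1-\lambda_k^{B_R}\bigr),
\]
so that $\|\rho_{g,B_R}-1_{B_R}\|_{L^1}=\sum_{k\le A_{B_R}}(1-\lambda_k^{B_R})+\sum_{k>A_{B_R}}\lambda_k^{B_R}$. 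This identity exhibits the error as the total deviation of the eigenvalue sequence from the step function jumping at $A_{B_R}$.

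For the upper bound I would invoke Theorem \ref{th_one_point} directly. The boundary $\partial B_R$ is a Euclidean sphere of radius $R$ in $\Rtdst$, so $|\partial B_R|=c_d R^{2d-1}$, which is $\ge 1$ for all $R$ above a fixed threshold. In this regime (and for $g\in M^\ast$, the setting in which the bound $\le C'R^{2d-1}$ is meaningful) Theorem \ref{th_one_point} yields $\|\rho_{g,B_R}-1_{B_R}\|_{L^1}\le C_g|\partial B_R|=C'R^{2d-1}$.

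For the lower bound I would discard positive terms and bound the eigenvalue sum from below by $\func{tr}(H_{B_R}-H_{B_R}^2)$: since the $\lambda_k^{B_R}\in[0,1]$ are nonincreasing, termwise $1-\lambda_k\ge\lambda_k(1-\lambda_k)$ for $k\le A_{B_R}$ and $\lambda_k\ge\lambda_k(1-\lambda_k)$ for $k>A_{B_R}$, whence $\|\rho_{g,B_R}-1_{B_R}\|_{L^1}\ge\sum_k\lambda_k^{B_R}(1-\lambda_k^{B_R})=\func{tr}(H_{B_R})-\func{tr}(H_{B_R}^2)$. Writing $H_{B_R}=\int_{B_R}\pi(z)g\otimes\pi(z)g\,dz$ gives $\func{tr}(H_{B_R}^2)=\iint_{B_R\times B_R}|V_gg(w-z)|^2\,dz\,dw$ (using that $|V_gg|$ is even), while $\func{tr}(H_{B_R})=\iint_{B_R\times\Rtdst}|V_gg(w-z)|^2$ because $\int_{\Rtdst}|V_gg|^2=\|g\|_2^4=1$. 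Subtracting and setting $u=w-z$,
\[
\func{tr}(H_{B_R})-\func{tr}(H_{B_R}^2)=\int_{\Rtdst}|V_gg(u)|^2\,\bigl|B_R\setminus(B_R-u)\bigr|\,du .
\]
The final input is the geometric estimate $|B_R\setminus(B_R-u)|\ge c\,\min(|u|,R)\,R^{2d-1}$ (the translate of a ball differs from it in a boundary layer of width $\sim\min(|u|,R)$). For $R\ge1$ this gives $\func{tr}(H_{B_R})-\func{tr}(H_{B_R}^2)\ge c R^{2d-1}\int_{|u|\le1}|u|\,|V_gg(u)|^2\,du=:C_g R^{2d-1}$, and $C_g>0$ because $V_gg$ is continuous with $V_gg(0)=\|g\|_2^2=1$, so the integral is strictly positive. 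This proves the lower bound for $R\ge1$; the residual range $0<R<1$ is elementary and follows by adjusting constants, so the essential scaling is captured.

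The main obstacle is the lower bound, and within it the key conceptual step is the reduction $\|\rho_{g,B_R}-1_{B_R}\|_{L^1}\ge\func{tr}(H_{B_R}-H_{B_R}^2)$, which converts an eigenvalue sum one cannot compute into an explicit boundary-straddling double integral; once that is in place the geometry is routine. A point worth stressing is the asymmetry between the two directions: the lower bound holds for \emph{every} $g\in\LtRd$ with norm $1$, since it only uses that $V_gg$ carries mass at scale $\sim1$ (no decay assumption), whereas the matching upper bound genuinely requires the concentration condition $g\in M^\ast$ through Theorem \ref{th_one_point} — without decay the error $\func{tr}(H_{B_R}-H_{B_R}^2)$ can grow like $R^{2d}$, exceeding the perimeter rate.
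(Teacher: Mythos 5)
Your proof is correct, and up to the key input it coincides step by step with the paper's: the paper likewise uses $0\le\rho_{\Omega}\le1$ to write the $L^1$ error as $\sum_{k\le A_\Omega}(1-\lambda_k^\Omega)+\sum_{k>A_\Omega}\lambda_k^\Omega$, bounds this termwise from below by $\sum_k\lambda_k^\Omega(1-\lambda_k^\Omega)=\mathrm{trace}(H_\Omega)-\mathrm{trace}(H_\Omega^2)$, and gets the upper bound from Theorem \ref{th_one_point} (you even prove the pointwise bound $\rho\le1$ via Parseval, which the paper only asserts). The genuine divergence is at the trace estimate: the paper isolates $\mathrm{trace}(H_{B_R})-\mathrm{trace}(H^2_{B_R})\ge CR^{2d-1}$ as Proposition \ref{prop_eig_2} and proves it by citation, invoking the plunge-region eigenvalue asymptotics of DeMari--Feichtinger--Nowak \cite{defeno02}, whereas you prove it from scratch by rewriting the difference as $\int_{\Rtdst}|V_gg(u)|^2\,|B_R\setminus(B_R-u)|\,du$ and combining the crescent estimate $|B_R\setminus(B_R-u)|\gtrsim\min(|u|,R)\,R^{2d-1}$ with the continuity of $V_gg$ and $V_gg(0)=\|g\|_2^2=1$. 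Your route is more elementary and self-contained, and it makes transparent why the lower bound holds for \emph{every} normalized $g\in L^2(\Rdst)$ --- a generality the paper's proposition also claims but delegates to the literature; what the citation buys the paper is brevity and access to \cite{defeno02}'s uniform estimates for more general families of dilated domains. Two small caveats, neither affecting correctness: your closing remark that without decay the trace difference ``can grow like $R^{2d}$'' is slightly overstated --- since $\int_{\Rtdst}|V_gg|^2=1$, dominated convergence applied to your own exact formula shows it is always $o(R^{2d})$, though monotone convergence in the same formula does show that for $g\notin M^\ast(\Rdst)$ it outgrows $R^{2d-1}$, so the matching upper bound genuinely fails, as you say; and your restriction of the upper bound to $R$ above a fixed threshold (where $|\partial B_R|\ge1$ and $g\in M^\ast$ is assumed) is not a defect of your write-up but an implicit reading that the paper itself adopts, since for $R\to0$ the $L^1$ error stays bounded away from $0$ while $R^{2d-1}\to0$.
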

To compare, the estimate given by (\ref{estTAMS}) is of the order $R^{2d-1/2}$.
See Figures \ref{fig_1} and \ref{fig_2} for an illustration of
Theorem \ref{th_sharp}, and \cite{AGR16} for numerical examples with other domains.
Results in the spirit of Theorem \ref{th_one_point} are also available in the context
of spaces of weighted analytic functions (see e.g. \cite{tian}) and the corresponding bounds match the ones
in Theorem \ref{th_one_point}.

\section{Example: Gaussian windows}
\label{sec_ex}
Let us consider the spectral problem associated with an operator of the form
(\ref{localization}), and symbol $1_{B_R}(z)$, the indication function of a disk
$B_R \subseteq \Rst^2$. Let
\begin{equation}
\newHz(t):=2^{1/4}e^{-\pi t^{2}},\text{ \ \ }t\in {\mathbb{R}}
\label{gaussian}
\end{equation}
be the one-dimensional Gaussian, normalized in $L^{2}$. Daubechies (on the signal side \cite{da88})
and Seip (directly on the phase space \cite{Seip91}) calculated the eigenfunctions and eigenvalues
of the time-frequency localization operator with window $\newHz$ and domain $\Omega =B_R$. For all
$R>0$,
the eigenfunctions of $H_{B_R}$ are the
Hermite functions:
\begin{equation*}
\newHk(t) = \frac{2^{1/4}}{\sqrt{k!}}\left(\frac{-1}{2\sqrt{\pi}}\right)^k
e^{\pi t^2} \frac{d^k}{dt^k}\left(e^{-2\pi t^2}\right), \qquad k \geq 0.
\end{equation*}
Writing $z:=x+i\xi \in {\mathbb{C}}$, the short-time Fourier transform of
the Hermite functions with respect to $h_{0}$ is
\begin{equation*}
V_{\newHz}\newHk(z)=e^{\pi ix\xi }\left( \frac{\pi ^{k}}{k!}\right)^{1/2}
\overline{z}^{k}e^{-\pi \left\vert z\right\vert ^{2}/2},\qquad k\geq 0\text{.}
\end{equation*}
Hence the corresponding spectrograms are
\begin{equation*}
\left\vert V_{\newHz}\newHk(z)\right\vert ^{2}=\frac{\pi ^{k}}{k!}\left\vert
z\right\vert ^{2k}e^{-\pi \left\vert z\right\vert ^{2}},\qquad k\geq 0\text{.}
\end{equation*}
Now, we have $\left\vert \Omega \right\vert =\left\vert B_R
\right\vert =\pi $ $R^{2}$. Consequently, in Definition \ref{def}, we can take $N=\lceil \pi $
$R^{2}\rceil $. The resulting
accumulated spectrogram is
\begin{equation}
\rho _{\newHz,B_R}(z)=\sum_{k=0}^{\lceil \pi R^{2}\rceil -1}
\frac{\pi ^{k}}{k!}\left\vert z\right\vert ^{2k}e^{-\pi \left\vert
z\right\vert ^{2}}\text{.}  \label{acginibre}
\end{equation}
Theorem \ref{th_one_point} says that, as $R\longrightarrow +\infty $,
\begin{equation}
\rho _{\newHz,B_R}(Rz)=\sum_{k=0}^{\lceil \pi R^{2}\rceil -1}
\frac{\pi ^{k}}{k!}R^{2k}\left\vert z\right\vert ^{2k}e^{-\pi \left\vert
Rz\right\vert ^{2}}\longrightarrow 1_{B_1}(z),\mbox{ in }
L^{1}({\mathbb{C}},dz)\text{,}  \label{limit}
\end{equation}
and gives a convergence rate of $O(1/R)$.
Thus one recovers the indicator function of the disk. Theorem \ref{th_one_point} provides us
with much more information: since the limit function $1_{\Omega }$ does not depend on the
window $g$, if one replaces the Gaussian (\ref{gaussian}) by \emph{an
arbitrary} $g\in {M^{\ast }}({\mathbb{R}^{d}})$ with $\left\Vert
g\right\Vert _{2}=1$, the limit (\ref{limit}) would give the same
result, and moreover the convergence rate is still $O(1/R)$. This gives a method to evaluate limits
of the type (\ref{limit}) in situations where one has no explicit formulas for $V_{g}h^\Omega_{k}(z)$ or
when the formulas are too complex to be analyzed directly.

\begin{figure}
\centering
\subfigure[The accumulated spectrogram corresponding to a circular domain of area $\approx$ 28.]{
\includegraphics[scale=0.4]{./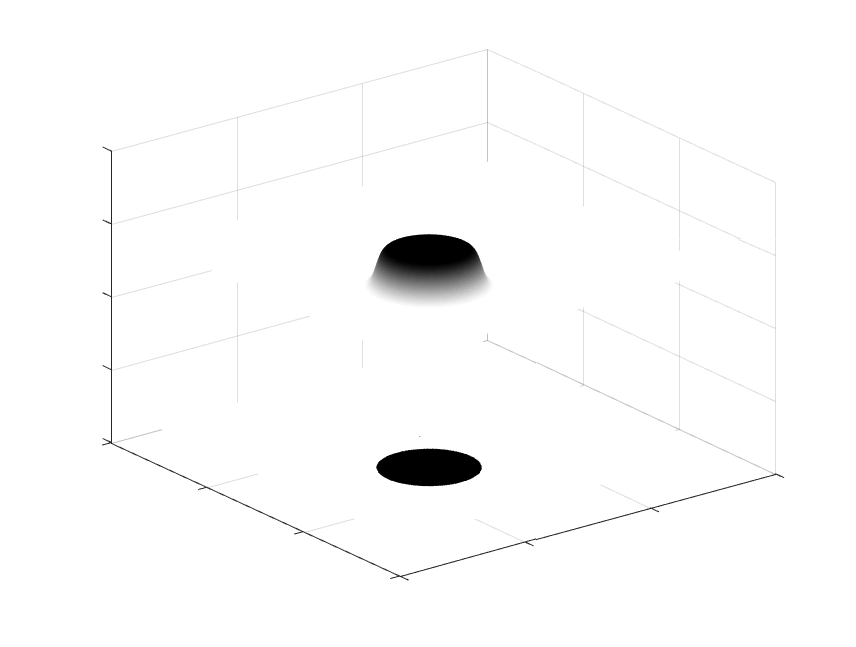}
\label{fig_dom}
}
\subfigure[A plot of the eigenvalues of the corresponding TF localization operator]{
\includegraphics[scale=0.4]{./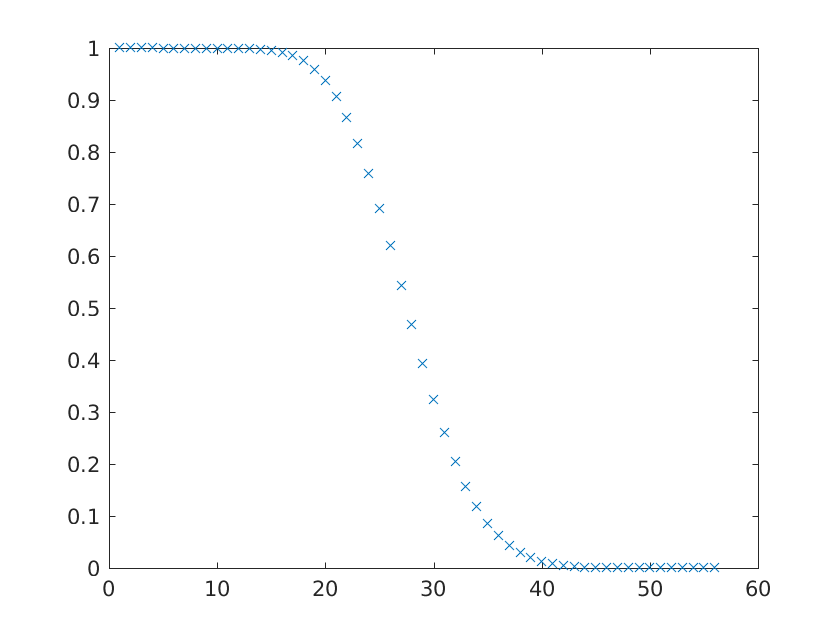}
}
\caption{An illustration of Theorem \ref{th_sharp} with a Gaussian window.}
\label{fig_1}
\end{figure}

\begin{figure}
\centering
\includegraphics[scale=0.7]{./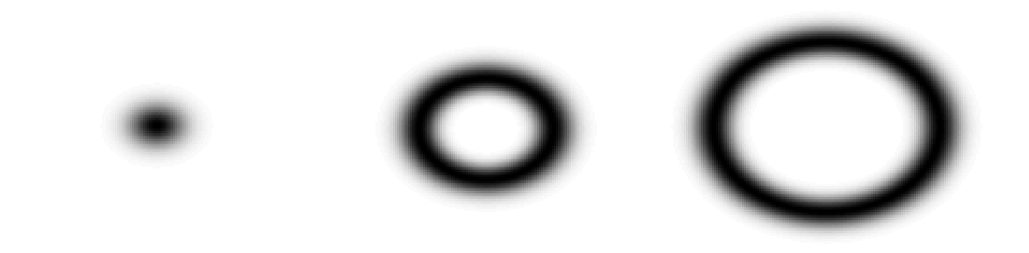}
\caption{Some of the eigenspectrograms corresponding to Figure \ref{fig_1}.}
\label{fig_2}
\end{figure}

\section{Some tools}
\subsection{Regularization by convolution}
A function $f\in L^{1}(\mathbb{R}^{d})$ is said to have bounded variation if
its distributional partial derivatives are finite Radon measures. We let
$\mathit{BV}({\mathbb{R}^{d}})$ denote the space of functions of bounded
variation. The variation of $f$ is defined as
\begin{equation*}
\func{Var}(f):=\sup \left\{ \int_{{\mathbb{R}^{d}}}f(x)\func{div}\phi (x)dx:\phi
\in C_{c}^{1}(\mathbb{R}^{d},\mathbb{R}^{d}),\left\vert \phi (x)\right\vert
_{2}\leq 1\right\} \text{,}
\end{equation*}
where $C_{c}^{1}(\mathbb{R}^{d},\mathbb{R}^{d})$ denotes the class of
compactly supported $C^{1}$-vector fields and $\func{div}$ is the divergence
operator. If $f$ is continuously differentiable and has integrable derivatives,
then $f\in \mathit{BV}({\mathbb{R}^{d}})$ and
\begin{equation*}
\func{Var}(f)=\int_{{\mathbb{R}^{d}}}\left\vert \nabla f(x)\right\vert _{2}dx\text{.}
\end{equation*}
A set $\Omega \subseteq {\mathbb{R}^{d}}$ is said to have \emph{finite
perimeter} if its indicator function $1_{\Omega }$ is of bounded variation.
In this case, the corresponding perimeter is defined as
\begin{equation*}
\abs{\partial \Omega }=\func{Var}(1_{\Omega })\text{.}
\end{equation*}
Every compact set $\Omega \subseteq {\mathbb{R}^{d}}$ with smooth boundary
has a finite perimeter and its perimeter is the $(d-1)$-dimensional surface
measure of its topological boundary (see \cite[Chapter 5]{evga92} for a
detailed account of these topics). The following lemma quantifies the error
introduced by convolution regularization. (For a proof, see \cite[Lemma 3.2]{AGR16}.)

\begin{lemma}
\label{lemma_var} Let $f\in \mathit{BV}({\mathbb{R}^{d}})$ and $\varphi \in
{L^{1}({\mathbb{R}^{d}})}$ with $\int \varphi =1$.
Then
\begin{equation*}
\lVert f\ast \varphi -f\rVert _{L^{1}({\mathbb{R}^{d}})}\leq
\func{Var}(f)\int_{\mathbb{R}^{d}}\left\vert x\right\vert _{2}\left\vert \varphi
(x)\right\vert dx\text{.}
\end{equation*}
In particular, for a set of finite perimeter $\Omega $:
\begin{equation*}
\lVert 1_{\Omega }\ast \varphi -1_{\Omega }\rVert _{L^{1}({\mathbb{R}^{d}})}\leq \left\vert
\partial
{\Omega }\right\vert \int_{\mathbb{R}^{d}}\left\vert x\right\vert _{2}\left\vert \varphi
(x)\right\vert dx\text{.}
\end{equation*}
\end{lemma}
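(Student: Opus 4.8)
The plan is to reduce the estimate to a bound on the $L^1$-modulus of continuity of $f$ and then to control that modulus by the variation. First I would use the normalization $\int \varphi = 1$ to rewrite the difference as an averaged translation error,
\begin{equation*}
(f \ast \varphi)(x) - f(x) = \int_{\mathbb{R}^d} \varphi(y)\,\bigl[f(x-y) - f(x)\bigr]\,dy.
\end{equation*}
Taking absolute values, integrating in $x$, and moving the $x$-integral inside via Minkowski's integral inequality (equivalently, the triangle inequality together with Tonelli) gives
\begin{equation*}
\lVert f\ast\varphi - f\rVert_{L^1(\mathbb{R}^d)} \leq \int_{\mathbb{R}^d} \lvert\varphi(y)\rvert\,\lVert \tau_y f - f\rVert_{L^1(\mathbb{R}^d)}\,dy,
\end{equation*}
where $\tau_y f := f(\cdot - y)$ denotes translation. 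Note $f\ast\varphi \in L^1$ by Young's inequality, so the left-hand side is well defined.

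The heart of the argument is the translation estimate $\lVert \tau_y f - f\rVert_{L^1} \leq \lvert y\rvert_2\,\func{Var}(f)$. For $f \in C^1$ with integrable gradient this is immediate from the fundamental theorem of calculus: writing $f(x-y) - f(x) = -\int_0^1 \nabla f(x - ty)\cdot y\,dt$, taking absolute values, and integrating in $x$ yields $\lvert y\rvert_2 \int_{\mathbb{R}^d} \lvert \nabla f\rvert = \lvert y\rvert_2\,\func{Var}(f)$, using the translation invariance of Lebesgue measure. Substituting this bound into the previous display and pulling the constant $\func{Var}(f)$ out of the $y$-integral produces the asserted inequality $\lVert f\ast\varphi - f\rVert_{L^1} \leq \func{Var}(f)\int_{\mathbb{R}^d} \lvert x\rvert_2\,\lvert\varphi(x)\rvert\,dx$. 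The particular case then follows by taking $f = 1_\Omega$ and invoking $\func{Var}(1_\Omega) = \lvert\partial\Omega\rvert$.

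The main obstacle is upgrading the translation estimate from smooth functions to a general $f \in \mathit{BV}(\mathbb{R}^d)$, where $\nabla f$ is only a finite vector-valued Radon measure and the pointwise computation above is unavailable. Here I would mollify: set $f_\varepsilon := f \ast \eta_\varepsilon$ for a smooth approximate identity $\eta_\varepsilon$. Each $f_\varepsilon$ is smooth with $\func{Var}(f_\varepsilon) = \int_{\mathbb{R}^d}\lvert \nabla f_\varepsilon\rvert \leq \func{Var}(f)$ — a standard consequence of the definition of $\func{Var}$ as a supremum over $C^1_c$ test vector fields, which passes through convolution — so the smooth estimate gives $\lVert \tau_y f_\varepsilon - f_\varepsilon\rVert_{L^1} \leq \lvert y\rvert_2\,\func{Var}(f)$ uniformly in $\varepsilon$. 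Since $f_\varepsilon \to f$ in $L^1$ and translation is an $L^1$-isometry, $\tau_y f_\varepsilon - f_\varepsilon \to \tau_y f - f$ in $L^1$, and continuity of the norm (or lower semicontinuity, via Fatou) preserves the bound in the limit. This last limiting step, together with the bookkeeping that $\func{Var}$ is not increased by mollification, is the only genuinely nontrivial point; the remaining manipulations are routine.
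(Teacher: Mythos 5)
Your proof is correct, and it follows essentially the same route as the paper, which does not prove the lemma itself but defers to \cite[Lemma 3.2]{AGR16}: there the estimate is likewise reduced, via $\int \varphi = 1$ and Minkowski's integral inequality, to the classical translation bound $\lVert f(\cdot - y) - f\rVert_{L^1} \leq |y|_2 \, \func{Var}(f)$ for $BV$ functions. Your mollification argument (with the observation that $\func{Var}(f \ast \eta_\varepsilon) \leq \func{Var}(f)$) correctly supplies the standard proof of that translation bound, which the cited reference imports from the $BV$ literature rather than re-deriving.
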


\subsection{Traces and disks}
\begin{proposition}
\label{prop_eig_2}
Let $g \in L^2(\Rdst)$ have norm 1 and let $B_R \subset \Rtdst$ be the ball of radius $R>0$
centered at the origin. Then there exist a constant $C>0$ such that
\begin{align}
\label{eq_conv}
\mathrm{trace}(H_{{B_R} }) - \mathrm{trace}(H^2_{B_R})
\geq C R^{2d-1}.
\end{align}
\end{proposition}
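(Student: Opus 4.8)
The plan is to reduce the trace difference to a single geometric integral and then estimate that integral. Write $z=(x,\xi)\in\Rtdst$ and let $g_z$ denote the time-frequency shift $g_z(t):=e^{2\pi i\xi t}g(t-x)$, so that $V_gf(z)=\langle f,g_z\rangle$, $\norm{g_z}_2=1$, and $H_{B_R}=\int_{B_R}(g_z\otimes g_z)\,dz$ as a superposition of rank-one projections (with $u\otimes v:=\langle\,\cdot\,,v\rangle u$). Since $\mathrm{trace}(g_z\otimes g_z)=\norm{g_z}_2^2=1$, the first trace is $\mathrm{trace}(H_{B_R})=\mes{B_R}$. Squaring the representation and taking traces, the rank-one factors collapse to $\mathrm{trace}\big((g_z\otimes g_z)(g_w\otimes g_w)\big)=\abs{\langle g_z,g_w\rangle}^2$, whence, using $\abs{\langle g_z,g_w\rangle}=\abs{V_gg(z-w)}$ and writing $\Phi(u):=\abs{V_gg(u)}^2$,
\[
\mathrm{trace}(H_{B_R}^2)=\int_{B_R}\int_{B_R}\abs{\langle g_z,g_w\rangle}^2\,dz\,dw=\int_{B_R}\int_{B_R}\Phi(z-w)\,dz\,dw .
\]
By Moyal's identity $\int_{\Rtdst}\Phi=\norm{V_gg}_2^2=\norm{g}_2^4=1$, so $\Phi$ is a probability density. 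Therefore
\[
\mathrm{trace}(H_{B_R})-\mathrm{trace}(H_{B_R}^2)=\int_{B_R}\Big(1-\int_{B_R}\Phi(z-w)\,dw\Big)\,dz=\int_{\Rtdst}\Phi(u)\,\mes{B_R\setminus(B_R+u)}\,du,
\]
the last equality following from $\int_{\Rtdst}\Phi(z-w)\,dw=1$, the substitution $u=z-w$, and Fubini.

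Next I would bound the displacement volume $\mes{B_R\setminus(B_R+u)}$ from below. By rotation invariance it depends only on $t:=\abs{u}$, and slicing perpendicular to $u$ gives the exact formula $\mes{B_R\setminus(B_R+u)}=2\omega_{2d-1}\int_0^{t/2}(R^2-s^2)^{(2d-1)/2}\,ds$ for $0\le t\le 2R$, where $\omega_{2d-1}$ is the volume of the unit $(2d-1)$-ball. For $t\le R$ each slice satisfies $R^2-s^2\ge\frac34 R^2$, so
\[
\mes{B_R\setminus(B_R+u)}\ge \omega_{2d-1}\Big(\tfrac{3}{4}\Big)^{(2d-1)/2}R^{2d-1}\abs{u}=:c_d\,R^{2d-1}\abs{u},\qquad \abs{u}\le R .
\]
This slicing estimate is the technical heart of the argument: it is precisely what converts the $O(R^{2d})$ bulk of the ball into the correct surface-order factor $R^{2d-1}$ multiplied by the displacement $\abs{u}$, and I expect it to be the main obstacle, since one must resist simply bounding $\mes{B_R\setminus(B_R+u)}\le\mes{B_R}=O(R^{2d})$.

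Finally I would combine the two steps. Restricting the integral to $\abs{u}\le R$ and inserting the geometric bound,
\[
\mathrm{trace}(H_{B_R})-\mathrm{trace}(H_{B_R}^2)\ge c_d\,R^{2d-1}\int_{\abs{u}\le R}\abs{u}\,\Phi(u)\,du\ge c_d\,R^{2d-1}\int_{\abs{u}\le 1}\abs{u}\,\Phi(u)\,du
\]
for $R\ge 1$. The constant $\gamma_g:=\int_{\abs{u}\le1}\abs{u}\,\abs{V_gg(u)}^2\,du$ is strictly positive and depends only on $g$: indeed $V_gg$ is continuous with $\abs{V_gg(0)}=\norm{g}_2^2=1$, so $\Phi>0$ near the origin and the weighted integral over $\{\abs{u}\le1\}$ cannot vanish. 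This yields the claim with $C=c_d\gamma_g$. I would note that since $\mathrm{trace}(H_{B_R})-\mathrm{trace}(H_{B_R}^2)\le\mathrm{trace}(H_{B_R})=\mes{B_R}=O(R^{2d})$, any bound of order $R^{2d-1}$ can only hold for $R$ bounded below; the argument delivers it for all $R\ge1$, which is precisely the range needed for the sharpness lower bound in Theorem~\ref{th_sharp} (the small-$R$ regime there being handled trivially). Apart from the slicing estimate, the two trace identities are routine consequences of the rank-one decomposition of $H_{B_R}$ and Moyal's formula.
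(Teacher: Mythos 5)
Your proof is correct, and it takes a genuinely different route from the paper's. The paper does not prove Proposition \ref{prop_eig_2} directly: it quotes the plunge-region eigenvalue asymptotics of DeMari--Feichtinger--Nowak, obtaining \eqref{eq_conv} by combining Proposition 4.2 and Lemma 4.3 of \cite{defeno02}. You instead give a self-contained elementary argument: the trace identities (the same ones the paper records as \eqref{eq_trace1}--\eqref{eq_trace2}) together with Moyal's identity reduce the trace difference exactly to $\int_{\Rtdst}\abs{V_gg(u)}^2\,\mes{B_R\setminus(B_R+u)}\,du$ (Tonelli applies since the integrand is nonnegative), and your slicing formula is exact --- the slices of $B_R$ and $B_R+u$ perpendicular to $u$ are concentric $(2d-1)$-balls, and the telescoping $\int_{-\infty}^{t/2}\bigl(f(s)-f(s-t)\bigr)ds=\int_{-t/2}^{t/2}f(s)\,ds$ with $f(s)=(R^2-s^2)_+^{(2d-1)/2}$ yields it --- so the key bound $\mes{B_R\setminus(B_R+u)}\ge c_d R^{2d-1}\abs{u}$ for $\abs{u}\le R$ is valid, and $\gamma_g>0$ follows from continuity of $V_gg$ on $\Rtdst$ and $V_gg(0)=\norm{g}_2^2=1$. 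Comparing the two: the paper's citation is short and situates the estimate within known spectral asymptotics, but imports the hypotheses of \cite{defeno02} (families of domains behaving like dilates of a fixed one); your argument needs only $g\in L^2$ with unit norm, is checkable line by line, and makes the constant explicit, $C=c_d\int_{\abs{u}\le1}\abs{u}\abs{V_gg(u)}^2du$ --- which is precisely the dependence on ``the size of $\abs{V_gg}$ near the origin'' that the paper attributes to the constant inherited from \cite{defeno02}. Your restriction to $R\ge1$ is not a defect but a necessary sharpening of the statement: since $\mathrm{trace}(H_{B_R})-\mathrm{trace}(H^2_{B_R})\le\mathrm{trace}(H_{B_R})=\mes{B_R}=O(R^{2d})=o(R^{2d-1})$ as $R\to0$, no uniform constant can work for all $R>0$, so the proposition (and the ``$R>0$'' in \eqref{eq_Cs}) must be read with $R$ bounded below; and, as you observe, the small-$R$ regime of Theorem \ref{th_sharp} is handled separately, e.g.\ via $\norm{\rho_{B_R}-1_{B_R}}_{L^1}\ge A_{B_R}-\sum_{k=1}^{A_{B_R}}\lambda_k^{B_R}\ge 1-\mes{B_R}$ when $\mes{B_R}$ is small.
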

\begin{proof}
This result is a special case of the asymptotics for the so-called
\emph{plunge region} of the eigenvalues of time-frequency localization operators \cite{da88, dapa88, da90}
- that is, the region where the eigenvalues are away from both $0$ and $1$, cf. Figure \ref{fig_1}; 
see also \cite{rato93, rato94, herato94, herato97, defeno02}.
Concretely, \eqref{eq_conv}
follows, for example, by combining Proposition 4.2 and Lemma 4.3 in \cite{defeno02}. (The result in
\cite{defeno02} applies to certain families of domains that behave qualitatively like dilates of a
single domain; the constant $C$ depends on the size of $\abs{V_g g}$ near the origin.)
\end{proof}

\section{Proof of Theorem \ref{th_one_point}}

\textbf{Step 1}. A direct calculation shows that the traces of $H_{\Omega }$
and $H_{\Omega }^{2}$ are given by
\begin{align}
& \mathrm{trace}(H_{\Omega })={|\Omega |},  \label{eq_trace1} \\
& \mathrm{trace}(H_{\Omega }^{2})=\int_{\Omega }\int_{\Omega }\left\vert
V_{g}g(z-z^{\prime })\right\vert ^{2}dzdz^{\prime }\text{.}
\label{eq_trace2}
\end{align}
(See for example \cite[Lemma 2.1]{AGR16}.). Therefore,
\begin{equation*}
\begin{aligned}
0 \leq \mathrm{trace}(H_{\Omega })-\mathrm{trace}(H_{\Omega
}^{2})&=\int_{\Omega }1-\left( 1_{\Omega }\ast \left\vert V_{g}g\right\vert
^{2}\right) (z)\,dz \\
& \leq \lVert 1_{\Omega }\ast \left\vert V_{g}g\right\vert ^{2}-1_{\Omega
}\rVert _{L^{1}({\mathbb{R}^{2d}})}\text{.}
\end{aligned}
\end{equation*}
Hence, by Lemma \ref{lemma_var} we conclude that
\begin{equation}
\label{eq_dev_1}
0\leq \mathrm{trace}(H_{\Omega })-\mathrm{trace}(H_{\Omega }^{2})\leq \lVert
g\rVert _{M^{\ast }}^{2}\left\vert \partial {\Omega }\right\vert \text{.}
\end{equation}
\textbf{Step 2}. Since
\begin{eqnarray*}
\mathrm{trace}(H_{\Omega }) &=&\sum_{k\geq 1}{\lambda _{k}^{\Omega }}, \\
\mathrm{trace}(H_{\Omega }^{2}) &=&\sum_{k\geq 1}{(\lambda _{k}^{\Omega }})^{2},
\end{eqnarray*}
we have
\begin{align*}
& \mathrm{trace}(H_{\Omega })-\mathrm{trace}(H_{\Omega }^{2})=\sum_{k\geq 1}{\lambda _{k}^{\Omega
}}(1-{\lambda _{k}^{\Omega }}) \\
& \qquad  =\sum_{k=1}^{A_{\Omega }}{\lambda _{k}^{\Omega }}(1-{\lambda
_{k}^{\Omega }})+\sum_{k>A_{\Omega }}{\lambda _{k}^{\Omega }}(1-{\lambda
_{k}^{\Omega }}) \\
& \qquad \geq \lambda _{A_{\Omega }}\sum_{k=1}^{A_{\Omega }}(1-{\lambda
_{k}^{\Omega }})+(1-\lambda _{A_{\Omega }}^{\Omega })\sum_{k>A_{\Omega }}{\lambda _{k}^{\Omega }} \\
& \qquad =\lambda _{A_{\Omega }}A_{\Omega }-\lambda _{A_{\Omega
}}\sum_{k=1}^{A_{\Omega }}{\lambda _{k}^{\Omega }}+(1-\lambda _{A_{\Omega
}}^{\Omega })
\left(\left\vert \Omega \right\vert -\sum_{k=1}^{A_{\Omega }}{\lambda _{k}^{\Omega }}\right)
\\
& \qquad =\lambda _{A_{\Omega }}A_{\Omega }+\left\vert \Omega \right\vert
(1-\lambda _{A_{\Omega }}^{\Omega })-\sum_{k=1}^{A_{\Omega }}{\lambda
_{k}^{\Omega }} \\
& \qquad =\left\vert \Omega \right\vert -\sum_{k=1}^{A_{\Omega }}{\lambda
_{k}^{\Omega }}+\lambda _{A_{\Omega }}^{\Omega }(A_{\Omega }-\left\vert
\Omega \right\vert ) \\
& \qquad \geq \left\vert \Omega \right\vert -\sum_{k=1}^{A_{\Omega }}{\lambda _{k}^{\Omega }}.
\end{align*}
Therefore, using the estimate in \eqref{eq_dev_1}, we obtain
\begin{equation}
\label{eq_sum}
\left\vert \Omega \right\vert -\sum_{k=1}^{A_{\Omega }}{\lambda _{k}^{\Omega
}}\leq C_{g}\left\vert \partial {\Omega }\right\vert \text{.}
\end{equation}
\textbf{Step 3}. Since $0\leq \rho_\Omega (z)\leq 1$, we can estimate
\begin{equation}
\begin{aligned}
\int_{\Omega }\left\vert \rho_\Omega (z)-1_{\Omega }(z)\right\vert \,dz
&=\left\vert
\Omega \right\vert -\int_{\Omega }\rho_\Omega (z)\,dz
\\
&=\abs{\Omega}-\sum_{k=1}^{A_\Omega} \int_{\Omega } \abs{V_g h^\Omega_k(z)}^2 dz
\\
&=\abs{\Omega}-\sum_{k=1}^{A_\Omega}
\ip{H_\Omega h^\Omega_k}{h^\Omega_k}
\\
&=\left\vert \Omega \right\vert
-\sum_{k=1}^{A_{\Omega }}{\lambda _{k}^{\Omega }}.
\end{aligned}
\label{eq_1}
\end{equation}
Similarly,
\begin{align}
\label{eq_2}
\int_{{\mathbb{R}^{2d}}\setminus \Omega }\left\vert \rho_\Omega (z)-1_{\Omega
}(z)\right\vert \,dz &=\int_{{\mathbb{R}^{2d}}\setminus \Omega }\rho_\Omega (z)\,dz
=\sum_{k=1}^{A_{\Omega }}(1-{\lambda _{k}^{\Omega }})
\\
\label{eq_3}
&=
A_{\Omega}-\sum_{k=1}^{A_{\Omega }}{\lambda _{k}^{\Omega }}
\\
\nonumber
&\leq 1+\left\vert \Omega
\right\vert -\sum_{k=1}^{A_{\Omega }}{\lambda _{k}^{\Omega }}.
\end{align}
Using \eqref{eq_sum} and the fact that $\left\vert \partial {\Omega }
\right\vert \geq 1$ we obtain the desired conclusion:
\begin{equation*}
\int_{{\mathbb{R}^{d}}}\left\vert \rho_\Omega (z)-1_{\Omega }(z)\right\vert
\,dz\leq 1+2\left( \left\vert \Omega \right\vert -\sum_{k=1}^{A_{\Omega }}{\lambda _{k}^{\Omega
}}\right) \leq C_{g}\left\vert \partial {\Omega }\right\vert \text{.}
\end{equation*}

\section{Proof of Theorem \ref{th_sharp}}
In view of Theorem \ref{th_one_point}, we only need to prove the lower bound
in \eqref{eq_Cs}.
Let $\Omega \subseteq \Rtdst$ be a compact domain with finite perimeter.
Using \eqref{eq_1}, \eqref{eq_2} and \eqref{eq_3} we obtain
\begin{align*}
\int_{\mathbb{R}^{2d} }\left\vert \rho_\Omega (z)-1_{\Omega }(z)\right\vert \,dz
&=\left\vert \Omega \right\vert
+ A_\Omega
-2 \sum_{k=1}^{A_{\Omega }}{\lambda _{k}^{\Omega }}
\\
& =
\left( A_\Omega - \sum_{k=1}^{A_{\Omega }}{\lambda _{k}^{\Omega }} \right)
+
\left(
\mathrm{trace}(H_{\Omega }) - \sum_{k=1}^{A_{\Omega }}{\lambda _{k}^{\Omega }}
\right)
\\
& = \sum_{k=1}^{A_\Omega} (1-\lambda^\Omega_k) +
\sum_{k>A_\Omega} \lambda^\Omega_k
\\
& \geq \sum_{k=1}^{A_\Omega} \lambda^\Omega_k (1-\lambda^\Omega_k) +
\sum_{k>A_\Omega} \lambda^\Omega_k (1-\lambda^\Omega_k)
\\
& = \mathrm{trace}(H_{\Omega }) -
\mathrm{trace}(H^2_{\Omega }).
\end{align*}
We apply these calculations to $\Omega=B_R$ to obtain
\begin{align}
\label{eq_a}
 \int_{\mathbb{R}^{2d} }\left\vert \rho_{{B_R}} (z)-1_{{B_R} }(z)\right\vert \,dz
 \geq \mathrm{trace}(H_{{B_R} }) - \mathrm{trace}(H^2_{{B_R} }).
\end{align}
Therefore, by Proposition \ref{prop_eig_2},
\begin{align}
 \int_{\mathbb{R}^{2d} }\left\vert \rho_{{B_R}} (z)-1_{{B_R} }(z)\right\vert \,dz
 \gtrsim R^{2d-1},
\end{align}
as desired.

\section{Acknowledgements}
Part of the research for this article was conducted while L. D. A. and J. L. R. visited
the Program in Applied and Computational Mathematics at Princeton
University. They thank PACM and in particular Prof. Amit Singer
for their kind hospitality. The figures were prepared with the Large Time-Frequency Analysis
Toolbox (LTFAT) \cite{ltfat1,ltfat2}.

\bibliographystyle{abbrv}

\end{document}